\theoremstyle{plain}
\newtheorem{theorem}{Theorem}[section]
\newtheorem{lemma}[theorem]{Lemma}
\theoremstyle{definition}
\theoremstyle{remark}
\newcommand{\nri}{n\rightarrow\infty}
\newcommand{\bbR}{\mathbb{R}}
\newcommand{\bbC}{\mathbb{C}}
\newcommand{\bbN}{\mathbb{N}}
\newcommand{\mch}{\mathcal{H}}
\DeclareMathOperator*{\supp}{supp}
\title[]{Density in Weighted Spaces}
\date{}
\begin{document}
\title[ ] {Algebraic Versus Analytic Density of Polynomials}

\bibliographystyle{plain}

\thanks{  }


\maketitle

\begin{center}
\textbf{Brian Simanek and Richard Wellman}
\end{center}


\begin{abstract}
We show that under very mild conditions on a measure $\mu$ on the real line, the span of $\{x^n\}_{n=j}^{\infty}$ is dense in $L^2(\mu)$ for any $j\in\bbN$.  We also present a slightly weaker result with an interesting proof that uses Sobolev orthogonality.
\end{abstract}

\vspace{5mm}

\section{Introduction}\label{intro}

Polynomial approximation is a powerful tool with many applications in analysis.  Two of the most celebrated results bare the name of Weierstrass and concern density in the sup-norm.  The  Weierstrass Approximation Theorem states that on a compact interval $[a,b]\subseteq\bbR$, the polynomials are dense in the space $C_{\bbR}([a,b])$ in the sup-norm.  More generally, the Stone-Weierstrass Theorem states that if $X$ is a compact Hausdorff space and $S$ is a subalgebra of $C_{\bbR}(X)$ that separates points and includes the constant functions, then $S$ is dense in $C_{\bbR}(X)$ in the sup-norm (see \cite[Theorem 2.5.2]{SimI}).

One can also consider polynomial approximation in settings where these theorems do not apply, such as approximation in $L^2$ spaces or using collections of polynomials that do not form a subalgebra of $C_{\bbR}(X)$.  This latter situation is especially relevant since the discovery of families of exceptional orthogonal polynomials in 2009 (see \cite{GKM09}).  These are families of polynomials that are analogous to classical orthogonal polynomials, but they do not include polynomials of every degree.  Nevertheless, they can still form an orthonormal basis for certain weighted $L^2$ spaces on the real line.  While this fact clearly does not create a contradiction with any known results, it is still surprising in that it highlights the significant difference between sets that are algebraically dense and sets that are analytically dense.

This is the starting point of the problem that we will investigate here.  Specifically, we will consider spaces of the form $L^2(\bbR,d\mu)$ for some measure $\mu$ and sets of polynomials that omit polynomials of certain degrees.  The most basic such set is $\{x^n\}_{n=j}^{\infty}$ for some $j\in\bbN$.  This is the setting we will consider closely.

A more general approximation result is the Muntz Theorem, which stats that $\{1,x^{\lambda_1},x^{\lambda_2},\ldots\}$ is dense in $C[0,1]$ if and only if
\[
\sum_{j=1}^{\infty}\frac{\lambda_j}{\lambda_j^2+1}=\infty
\]
(see \cite[Theorem 4.2.1]{BE161}).  There are more general versions of this theorem that apply in various function spaces, all with their own hypotheses on the space or the accuracy of approximation (see \cite[Chapters 4 and 6]{BE161}).  Our main result will have significant overlap with the scope of those results, but is distinct from any result we could find in the literature.

Fix a function $t\in\bbR[[x]]$.  We will assume that $t$ has growth order $0$ at infinity and that
\begin{equation}\label{tdef}
t(x)=cx^s\prod_{j=1}^{M}\left(1-\frac{x}{x_j}\right),
\end{equation}
where $M\in\bbN\cup\{\infty\}$ and $s\in\bbN\cup\{0\}$.  Let $\mu$ be a finite positive measure on $\bbR$ with infinite support and \textit{with no mass points at the zeros of $t$} and define
\begin{equation}\label{mudef}
\mch_{t}=L^2(t(x)^2d\mu(x),\bbR),
\end{equation}
and assume that polynomials are dense in both $\mch_t$ and $\mch_1$ (where $\mch_1=L^2(d\mu(x),\bbR)$).
Here is our main result.

\begin{theorem}\label{maindense}
For any $j\in\bbN$, finite linear combinations of the polynomials $\{x^n\}_{n=j}^{\infty}$ are dense in $\mch_1$.
\end{theorem}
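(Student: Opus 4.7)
The plan is to argue by duality: suppose $f \in \mch_1$ satisfies $\int f(x) x^n \, d\mu(x) = 0$ for every $n \geq j$, and show that $f = 0$ in $\mch_1$. The key structural observation is that the multiplication operator $M_t \colon \mch_t \to \mch_1$, $g \mapsto tg$, is an isometric bijection: norm preservation is immediate from the definitions of the two spaces, and surjectivity uses the assumption that $\mu$ places no mass on the countable zero set of $t$, so that $t \neq 0$ $\mu$-almost everywhere and any $h \in \mch_1$ is the image of $h/t \in \mch_t$. Polynomial density in $\mch_t$ therefore translates into density of $\{t(x)p(x) : p \in \bbR[x]\}$ in $\mch_1$. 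Choosing polynomials $q_n$ that converge to $f/t \in \mch_t$ yields $tq_n \to f$ in $\mch_1$, and hence
\[
\|f\|_{\mch_1}^2 \;=\; \lim_{n\to\infty} \int f(x)\, t(x) q_n(x)\, d\mu(x).
\]

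Next I would exploit that $tq_n$ is entire of growth order $0$, with Taylor expansion $\sum_{k\ge s} c_k^{(n)} x^k$ beginning at the degree-$s$ factor of $t$. The plan is to show that the Taylor partial sums $T_N^{(n)}$ converge to $tq_n$ not just locally uniformly but also in $\mch_1$. The intended tool is dominated convergence applied with the positive entire majorant $\sum_k |c_k^{(n)}||x|^k$, again of order $0$: the growth-order hypothesis on $t$ forces rapid coefficient decay, while polynomial density in $\mch_1$ guarantees all moments of $\mu$ are finite and supplies the integrability needed at infinity.

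With $\mch_1$-Taylor convergence in hand, one computes
\[
\int f \cdot T_N^{(n)} \, d\mu \;=\; \sum_{k=s}^N c_k^{(n)} \int f x^k \, d\mu \;=\; \sum_{k=s}^{\min(N,j-1)} c_k^{(n)} m_k,
\]
where $m_k := \int f x^k \, d\mu$ and every term with $k \ge j$ vanishes. When $s \ge j$ the sum is empty, giving $\int f \cdot tq_n \, d\mu = 0$ in the limit and hence $\|f\|_{\mch_1}^2 = 0$. When $s < j$, I would instead arrange $c_s^{(n)} = \cdots = c_{j-1}^{(n)} = 0$; the lower-triangular relation between the $c_k^{(n)}$ and the coefficients of $q_n$ (with nonzero diagonal $\alpha_s = c$) makes this exactly equivalent to requiring $q_n \in \mathrm{span}\{x^k : k \ge j-s\}$. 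Thus the theorem at exponent $j$ for $\mch_1$ reduces to the analogous statement at exponent $j-s$ for $\mch_t$, which I propose to close by induction on $j$: the base case $j = 0$ is given, and at each step the measure $t^2 d\mu$ paired with the weight $t$ itself inherits the structural hypotheses so that the same machinery applies.

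The chief obstacle is the $\mch_1$-Taylor-truncation estimate: producing, for every polynomial $q$, an $L^2(d\mu)$-majorant of the partial sums of $tq$ that is independent of $N$. This is precisely where the growth-order-$0$ hypothesis on $t$ does essential work, both by making the dominating series $\sum_k |c_k||x|^k$ of order $0$ and by keeping its growth compatible with the moment growth of $\mu$ that polynomial density in $\mch_1$ dictates. A secondary, more bookkeeping obstacle is verifying for the inductive step that polynomial density transfers to $L^2(t^{2k} d\mu)$ for every $k \in \bbN$, which should again follow from the order-zero structure of $t$ together with standard moment-problem considerations.
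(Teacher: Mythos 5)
The engine of your argument---the isometric bijection $M_t\colon\mch_t\to\mch_1$ together with polynomial density in $\mch_t$, so that $\{t(x)p(x):p\in\bbR[x]\}$ is dense in $\mch_1$---is exactly the mechanism of the paper's proof (its identity \eqref{ft} plus the standing density hypothesis); you run it in dual form (show the annihilator of the span is trivial) where the paper runs it constructively (exhibiting $1,x,\dots,x^{j-1}$ as $\mch_1$-limits of elements of the span). For the theorem actually being proved, the decisive move---and the one the paper makes---is to instantiate $t(x)=x^j$. Then $s=j$ and $M=0$, you land in your own ``$s\ge j$'' case, $tq_n$ is a genuine polynomial all of whose monomials have exponent at least $j$, and the proof closes immediately: $f\perp x^n$ for all $n\ge j$ forces $f\perp x^jp$ for every polynomial $p$, hence $f$ is orthogonal to a dense subset of $\mch_1$, hence $f=0$. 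No Taylor truncation and no induction are needed. Had you committed to that choice of $t$, your proposal would be a complete and correct proof, essentially the paper's argument read through orthogonal complements.

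The problem is that, as written, you keep the general $t$ of \eqref{tdef} and let the proof's completeness rest on two steps that do not go through. First, the $L^2(\mu)$-convergence of the Taylor partial sums of $tq$: your proposed dominating function $\sum_k|c_k||x|^k$ need not lie in $L^2(\mu)$. Polynomial density in $\mch_1$ gives finiteness of every moment but imposes no bound on the \emph{rate} of moment growth, while an entire function of order zero is not polynomially bounded; worse, on $\supp\mu$ the positive-coefficient majorant can vastly exceed $|t(x)q(x)|$ because all the cancellation in $\prod(1-x/x_j)$ is destroyed, so even the fact that $tq\in L^2(\mu)$ does not help. Second, the induction: your reduction sends the pair $(j,\mch_1)$ to $(j-s,\mch_t)$, which does not decrease $j$ at all when $s=0$, and when $s\ge1$ it requires polynomial density in $L^2(t^{2k}d\mu)$ for every $k$ along the chain, which is not among the hypotheses and would have to be proved separately. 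None of this machinery is needed for Theorem \ref{maindense}; it would only arise if you were proving a dual form of Theorem \ref{tdense} for general $t$, and even there the paper's route---expand $1/t_k\in\mch_t$ in the orthonormal polynomials $p_{n,t}$ and push the expansion forward by $M_t$---recovers the missing low-degree polynomials $t/t_k$ without ever truncating a Taylor series in $L^2$.
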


\section{A First Approach}\label{first}

Let $\{p_{n,t}(x)\}_{n=0}^{\infty}$ be the orthonormal polynomials in $\mch_t$ obtained by applying Gram-Schmidt to the sequence $\{x^n\}_{n=0}^{\infty}$ in $\mch_{t}$.  Let $\{q_{n,t}(x)\}_{n=0}^{\infty}$ be the orthonormal functions obtained by applying Gram-Schmidt to the sequence $\{x^nt(x)\}_{n=0}^{\infty}$ in $\mch_{1}$.  It is easy to verify that
\[
q_{n,t}(x)=t(x)p_{n,t}(x).
\]
We will prove the following result:

\begin{theorem}\label{tdense}
For any $t$ and $\mu$ as in \eqref{tdef} and \eqref{mudef}, the span of $\{q_{n,t}(x)\}_{n=0}^{\infty}$ is dense in $\mch_1$.
\end{theorem}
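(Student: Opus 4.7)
The plan is to exploit the duality between $\mch_1$ and $\mch_t$ effected by multiplication/division by $t$, and use the assumed polynomial density in $\mch_t$.

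First I would observe that since Gram--Schmidt preserves spans, the span of $\{q_{n,t}\}_{n=0}^{\infty}$ equals the span of $\{t(x)x^n\}_{n=0}^{\infty}$, i.e., $t\cdot\bbR[x]$. These functions do lie in $\mch_1$: applied to the constant polynomial $1$, the assumption that polynomials belong to $\mch_t$ gives $\int t(x)^2\,d\mu(x)<\infty$, and more generally $\int (t(x)P(x))^2\,d\mu(x)=\|P\|_{\mch_t}^2<\infty$ for every $P\in\bbR[x]$.

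Next I would prove density by the standard orthogonal-complement criterion. Let $f\in\mch_1$ satisfy
\[
\int_{\bbR} f(x)\,t(x)\,x^n\,d\mu(x)=0\qquad\text{for every }n\in\bbN\cup\{0\}.
\]
The key trick is to pass from $f\in L^2(d\mu)$ to the function $g(x):=f(x)/t(x)$, which is a legitimate operation $\mu$-a.e.\ precisely because $\mu$ has no mass at the zeros of $t$ (this is where the hypothesis on $\mu$ enters). A direct computation gives
\[
\|g\|_{\mch_t}^2=\int (f/t)^2\,t^2\,d\mu=\int f^2\,d\mu=\|f\|_{\mch_1}^2<\infty,
\]
so $g\in\mch_t$. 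Rewriting the orthogonality hypothesis in $\mch_t$,
\[
\langle g, x^n\rangle_{\mch_t}=\int g(x)\,x^n\,t(x)^2\,d\mu(x)=\int f(x)\,t(x)\,x^n\,d\mu(x)=0
\]
for every $n\geq 0$. Since polynomials are dense in $\mch_t$ by assumption, $g=0$ in $\mch_t$, hence $f=0$ $\mu$-a.e., completing the proof.

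The argument is short and I expect no real obstacle; the one point requiring care is verifying that $f/t$ is well-defined as an element of $\mch_t$, which is exactly why the hypothesis excluding mass points of $\mu$ at the zeros of $t$ was built into the setup. In particular, nothing in the argument requires $t$ to be a polynomial, so the same proof applies to the full formal power series $t(x)$ as defined in \eqref{tdef}, provided one interprets the integrals with respect to $\mu$, which is legitimate because $t\in L^2(d\mu)$ as noted above.
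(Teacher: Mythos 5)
Your proof is correct, and it takes a genuinely different (dual) route from the paper's. The paper argues directly: it defines partial products $t_k$ obtained by stripping the first $k$ factors from $t$, expands $1/t_k\in\mch_t$ in the orthonormal basis $\{p_{n,t}\}$ (using density of polynomials in $\mch_t$), and transfers the expansion to $\mch_1$ via the isometry $\|tf\|_{\mch_1}=\|f\|_{\mch_t}$ to conclude that each degree-$k$ polynomial $t/t_k$ lies in the closed span of $\{q_{n,t}\}$; it then needs the standing assumption that polynomials are dense in $\mch_1$ to finish. You instead run the annihilator argument: $f\perp t\cdot\bbR[x]$ in $\mch_1$ forces $g=f/t$ (well defined $\mu$-a.e.\ since the zero set of $t$ is countable and carries no $\mu$-mass) to lie in $\mch_t$ and be orthogonal to all polynomials there, hence $g=0$ and $f=0$. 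Both proofs pivot on exactly the same two ingredients --- the isometry induced by multiplication by $t$ and polynomial density in $\mch_t$ --- but your version is shorter and, notably, never invokes the density of polynomials in $\mch_1$, so it proves the theorem under a slightly weaker hypothesis. What the paper's route buys in exchange is explicit information: it exhibits every polynomial as a limit of concrete partial sums of the $\{q_{n,t}\}$-expansion, which is more constructive than knowing only that the orthogonal complement is trivial. One small point worth stating explicitly in your writeup: the identity $gt^2=ft$ used to rewrite the orthogonality relations holds only $\mu$-a.e.\ (off the zero set of $t$), which is again exactly where the no-mass-points hypothesis enters, just as you flagged for the definition of $g$ itself.
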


\begin{proof}
Define $t_k=t(x)x^{-k}$ for $k\leq s$ and
\[
t_{k+s}(x)=\prod_{j=k+1}^{M}\left(1-\frac{x}{x_j}\right),\qquad\qquad\qquad k\in\{1,\ldots,M\}.
\]
Observe that $1/t_k(x)\in\mch_{t}$ for every $k\in\{1,\ldots,M+s\}$.  Let $\langle\cdot,\cdot\rangle_{t}$ denote the inner product in $\mch_{t}$.  Fix $k\in\{1,\ldots,M+s\}$ and observe
\begin{align*}
\frac{1}{t_k(x)}&=\sum_{n=0}^{\infty}\langle 1/t_k(x),p_{n,t}(x)\rangle_{t}\,p_{n,t}(x)\\
&=\frac{1}{t(x)}\sum_{n=0}^{\infty}\left\langle \frac{t(x)}{t_k(x)},q_{n,t}(x)\right\rangle_{1}\,q_{n,t}(x)
\end{align*}
where the equalities are understood to be in the space $\mch_{t}$.  Next, observe that
\begin{equation}\label{ft}
\|t(x)f\|_{\mch_{1}}^2=\|f\|_{\mch_{t}}^2
\end{equation}
for all $f\in\mch_t$.  Thus
\begin{align*}
&\lim_{N\rightarrow\infty}\left\|\frac{1}{t_k(x)}-\frac{1}{t(x)}\sum_{n=0}^{N}\left\langle \frac{t(x)}{t_k(x)},q_{n,t}(x)\right\rangle_{1}\,q_{n,t}(x)\right\|_{\mch_{t}}=0\\
\iff&\lim_{N\rightarrow\infty}\left\|\frac{t(x)}{t_k(x)}-\sum_{n=0}^{N}\left\langle \frac{t(x)}{t_k(x)},q_{n,t}(x)\right\rangle_{1}\,q_{n,t}(x)\right\|_{\mch_{1}}=0
\end{align*}
We conclude that in $\mch_{1}$ it holds that
\begin{align*}
\frac{t(x)}{t_k(x)}=\sum_{n=0}^{\infty}\left\langle \frac{t(x)}{t_k(x)},q_{n,t}(x)\right\rangle_{1}\,q_{n,t}(x)
\end{align*}
Notice that $t/t_k$ is a polynomial of degree exactly $k$.  If $M=\infty$, then we conclude that all polynomials are in the closure of the span of $\{q_{n,t}\}_{n=0}^{\infty}$ in $\mch_1$.  If $M\in\bbN$, then we have shown that all polynomials of degree up to $M+s$ are in the closure of the span of $\{q_{n,t}\}_{n=0}^{\infty}$ in $\mch_1$.  Since $\{q_{n,t}\}_{n=0}^{\infty}$ includes polynomials of every degree larger than $M+s$, we conclude that the closure of the span of $\{q_{n,t}\}_{n=0}^{\infty}$ in $\mch_1$ includes all polynomials.  
\end{proof}

\begin{proof}[Proof of Theorem \ref{maindense}]
Apply Theorem \ref{tdense} with $t(x)=x^j$.
\end{proof}

\bigskip

Note that we need to assume that $\mu$ has no mass points at the zeros of $t$.  To see this, consider the case when $\mu$ has a mass point at $1$ and $t(x)=(x-1)$.  In this setting, we can consider the functions
\[
f_1(x)=\frac{1}{x-1},\qquad\qquad f_2(x)\frac{1-\chi_{\{1\}}(x)}{x-1},
\]
both of which are in $\mch_t$.  In $\mch_t$, $f_1=f_2$ since $t^2\mu(\{1\})=0$.  However, in $\mch_1$ we have $tf_1\neq tf_2$.  Thus, multiplication by $t$ is not a well-defined map from $\mch_t$ to $\mch_1$ and hence \eqref{ft} is not valid.

\section{A Second Approach}\label{second}

Our second approach to proving Theorem \ref{maindense} will make use of Sobolev Orthogonality.  This is a notion of orthogonality that considers not just functions, but also their derivatives.  We will prove the following theorem, which is slightly weaker than Theorem \ref{maindense}:

\begin{theorem}\label{maindense2}
Let $\mu$ be a probability measure on $\bbR$ and that polynomials are dense in $L^2(\mu)$.  Suppose $\supp(\mu)\subseteq[0,\infty)$ and $0$ is not an isolated point of $\supp(\mu)$ and $\mu(\{0\})=0$.  Then for any $n\in\bbN_0$, finite linear combinations of the polynomials $\{x^j\}_{j=n+1}^{\infty}$ are dense in $L^2(\mu)$.
\end{theorem}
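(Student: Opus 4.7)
My plan is to use Sobolev orthogonality, in the spirit of Section~2 but twisted so the resulting orthonormal polynomials automatically lie in $\mathrm{span}\{x^j\}_{j\geq n+1}$. Set $k=n+1$ and define on $\bbR[x]$ the Sobolev inner product
\[
\langle p,q\rangle_S \,:=\, \sum_{j=0}^{n} p^{(j)}(0)\,q^{(j)}(0) \,+\, \int p^{(k)}(x)\,q^{(k)}(x)\,d\mu(x).
\]
This is positive definite because $\mu$ has infinite support: if $\|p\|_S=0$, then $p^{(k)}$ vanishes $\mu$-a.e., so $p^{(k)}\equiv 0$ as a polynomial with infinitely many zeros, forcing $\deg p<k$; then the $k$ vanishing Taylor coefficients at $0$ force $p\equiv 0$. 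Gram--Schmidt on $\{x^m\}_{m\geq 0}$ yields Sobolev-orthonormal polynomials $\{s_m\}_{m\geq 0}$ with $\deg s_m = m$.

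Two structural consequences follow. First, for $0\leq j\leq n$ we have $(x^j)^{(k)}\equiv 0$, so $\langle s_m,x^j\rangle_S = j!\,s_m^{(j)}(0)$; when $m\geq k$ the polynomial $x^j$ lies in $\mathrm{span}\{s_0,\ldots,s_{m-1}\}$, so orthogonality forces $s_m^{(j)}(0)=0$ for every $0\leq j\leq n$. Thus $s_m$ vanishes to order $k$ at $0$, so $s_m\in\mathrm{span}\{x^i\}_{i\geq k}$; by a dimension count, $\mathrm{span}\{s_m\}_{m\geq k}=\mathrm{span}\{x^i\}_{i\geq k}$. Second, for $m,m'\geq k$ the Taylor-data portion of $\langle s_m,s_{m'}\rangle_S$ vanishes, leaving $\int s_m^{(k)}\,s_{m'}^{(k)}\,d\mu=\delta_{mm'}$. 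Since $\deg s_m^{(k)}=m-k$ ranges over all nonnegative integers and polynomials are dense in $L^2(\mu)$, the family $\{s_m^{(k)}\}_{m\geq k}$ is an orthonormal basis of $L^2(\mu)$.

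To close the argument I must upgrade these structural facts to density of $\mathrm{span}\{s_m\}_{m\geq k}=\mathrm{span}\{x^j\}_{j\geq k}$ in $L^2(\mu)$. I plan to take $h\in L^2(\mu)$ orthogonal to every $s_m$ with $m\geq k$; equivalently, $\int x^j h\,d\mu=0$ for all $j\geq k$. The finite signed measure $\sigma:=x^k h\,d\mu$ on $[0,\infty)$ (finite by Cauchy--Schwarz, since $x^k\in L^2(\mu)$) then has $\int x^i\,d\sigma=0$ for every $i\geq 0$, and by Cauchy--Schwarz the moments of $|\sigma|$ are dominated by $\sqrt{m_{2i+2k}(\mu)}\,\|h\|_{L^2(\mu)}$. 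Using the Stieltjes hypothesis $\supp(\mu)\subseteq[0,\infty)$ together with the $N$-extremal character of $\mu$ (the content of the $L^2(\mu)$-density of polynomials), I will argue that $|\sigma|$ inherits a Carleman-type growth condition and is therefore Stieltjes-determinate, forcing $\sigma=0$. Then $x^k h=0$ $\mu$-a.e., and since $\mu(\{0\})=0$ this yields $h=0$ in $L^2(\mu)$.

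I expect the main obstacle to be this final step---propagating determinacy from $\mu$ to $\sigma$. The Sobolev construction of the first two paragraphs is clean and routine; the real analytic content is in the passage ``all polynomial moments of $\sigma$ vanish $\Rightarrow\sigma=0$'', which sits outside Sobolev theory proper. It is here that the Stieltjes assumption $\supp(\mu)\subseteq[0,\infty)$, the condition $\mu(\{0\})=0$, and the non-isolation of $0$ in $\supp(\mu)$ do the essential work: they allow the standard Stieltjes moment-problem machinery (not available for general Hamburger signed measures) to apply to the total variation of $\sigma$, and they ensure the $x^k$ weight on $d\mu$ does not pick up a point mass at $0$ that would obstruct recovery of $h$ from $x^kh$.
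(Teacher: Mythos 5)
Your Sobolev construction in the first two paragraphs is internally correct, but it ends up carrying no weight: once you have identified $\mathrm{span}\{s_m\}_{m\geq k}$ with $\mathrm{span}\{x^i\}_{i\geq k}$, you abandon the Sobolev structure and reduce the theorem to the claim that $h\in L^2(\mu)$ with $\int x^jh\,d\mu=0$ for all $j\geq k$ forces $h=0$. That claim is not a stepping stone; it \emph{is} the theorem, restated in dual form, so the entire burden falls on your last step. And that step has a genuine gap. Density of polynomials in $L^2(\mu)$ does not yield any Carleman-type growth condition: the Carleman condition is sufficient but not necessary for determinacy, and determinate measures (a fortiori measures with dense polynomials) can violate it, so the Cauchy--Schwarz bound $s_i\leq\sqrt{m_{2i+2k}(\mu)}\,\|h\|_{L^2(\mu)}$ gives you nothing. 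Moreover, even if $\mu$ were known to be determinate, the measure $|\sigma|=x^k|h|\,d\mu$ need not inherit determinacy: $|h|$ is only in $L^2(\mu)$ and may be unbounded, so $|\sigma|$ is not dominated by a constant multiple of $\mu$ and the standard hereditary results for dominated measures do not apply. The implication ``all moments of a signed measure vanish $\Rightarrow$ the measure is zero'' is exactly the kind of statement that fails without determinacy, and you have no route to determinacy of $|\sigma|$ from the stated hypotheses.

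A further warning sign is that your outline never consumes the hypothesis that $0$ is a non-isolated point of $\supp(\mu)$ in any quantitative way, whereas in the paper this hypothesis is the engine of the whole proof: Lemma \ref{gammarat} uses the accumulation of the zeros of $p_n$ (and of its derivatives) at $0$ to show $|\gamma_{n,j}/\gamma_{n,j+1}|\to0$, and this, combined with Parseval's theorem in the auxiliary space $L^2(\mu)\oplus\bbC^{n+1}$, is what annihilates the Taylor data $\beta$ and yields density. Since the conclusion genuinely requires control near $0$ (it fails outright if $\mu(\{0\})>0$, for instance), an argument that does not visibly use this hypothesis cannot be complete. To repair your approach you would need to replace the Carleman step by something like Lemma \ref{gammarat}, or by the isometry argument of Section \ref{first} with $t(x)=x^{n+1}$.
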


First, we need a lemma.

\begin{lemma}\label{gammarat}
Suppose $\supp(\mu)\subseteq[0,\infty)$ and $0\in\supp(\mu)$ is not an isolated point of $\supp(\mu)$.  Let
\[
p_{n}(x)=\gamma_{n,n}x^n+\gamma_{n,n-1}x^{n-1}+\cdots+\gamma_{n,1}x+\gamma_{n,0}
\]
be the degree $n$ orthonormal polynomial for $\mu$.  For any fixed $k\in\bbN$ it holds that
\[
\lim_{\nri}\left|\frac{\gamma_{n,k}}{\gamma_{n,k+1}}\right|=0.
\]
\end{lemma}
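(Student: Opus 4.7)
The plan is to express $|\gamma_{n,k}/\gamma_{n,k+1}|$ in terms of the reciprocals of the zeros of the $k$-th derivative $p_n^{(k)}$ via a logarithmic derivative computation, and then show this sum of reciprocals diverges by combining Rolle's theorem with the density of the zeros of $p_n$ in $\supp(\mu)$.

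From $\gamma_{n,j}=p_n^{(j)}(0)/j!$ one obtains
\[
\frac{\gamma_{n,k}}{\gamma_{n,k+1}}=(k+1)\,\frac{p_n^{(k)}(0)}{p_n^{(k+1)}(0)}.
\]
The $n$ zeros of $p_n$ are distinct and lie in $(0,\sup\supp\mu)$ by standard facts on orthogonal polynomials for a measure supported in $[0,\infty)$ with infinite support containing $0$. Applying Rolle's theorem $k$ times produces $n-k$ distinct positive zeros $z_1^{(n)}<\cdots<z_{n-k}^{(n)}$ of $p_n^{(k)}$. Writing $p_n^{(k)}(x)=c_n\prod_{j=1}^{n-k}(x-z_j^{(n)})$ and evaluating the logarithmic derivative at $x=0$ gives
\[
\frac{p_n^{(k+1)}(0)}{p_n^{(k)}(0)}=-\sum_{j=1}^{n-k}\frac{1}{z_j^{(n)}},
\]
so that
\[
\left|\frac{\gamma_{n,k}}{\gamma_{n,k+1}}\right|=\frac{k+1}{\sum_{j=1}^{n-k}1/z_j^{(n)}}.
\]

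It remains to prove that the denominator tends to infinity. Fix $\epsilon>0$. Because $0$ is not an isolated point of $\supp\mu\subseteq[0,\infty)$, the support meets $(0,\epsilon)$, and therefore $\mu((0,\epsilon))>0$. A classical result on zero distribution of orthogonal polynomials (see, e.g., Szeg\H{o}'s \emph{Orthogonal Polynomials}) states that for any open interval $I$ with $\mu(I)>0$ the number of zeros of $p_n$ in $I$ tends to infinity as $n\to\infty$. Rolle's theorem preserves this property up to a loss of $k$ zeros per differentiation, so the number of $z_j^{(n)}$ in $(0,\epsilon)$ also tends to infinity. Each such $z_j^{(n)}$ contributes at least $1/\epsilon$ to the sum, so $\sum_{j}1/z_j^{(n)}\to\infty$ and the lemma follows.

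The main obstacle is invoking the classical zero-density result in the third paragraph. If a self-contained argument is preferred, one would proceed by contradiction: assuming $p_n$ has at most $M$ zeros in $(0,\epsilon)$ for infinitely many $n$, one uses orthogonality of $p_n$ against $\prod_{x_i^{(n)}\in(0,\epsilon)}(x-x_i^{(n)})$ times a low-degree polynomial chosen to make the integrand sign-definite on $\supp\mu$, contradicting $\mu((0,\epsilon))>0$; but the citation route keeps the proof short.
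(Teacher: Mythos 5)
Your proof follows essentially the same route as the paper's: both compute $\gamma_{n,k}/\gamma_{n,k+1}=(k+1)\,p_n^{(k)}(0)/p_n^{(k+1)}(0)$, write the logarithmic derivative of $p_n^{(k)}$ at $0$ as $-\sum_j 1/z_j^{(n)}$ over the (positive, via Rolle) zeros of $p_n^{(k)}$, and deduce the result from the divergence of that sum; the paper extracts divergence from a single term (the smallest zero of $p_n^{(k)}$ tends to $0$), while you extract it from the growing number of terms each bounded below by $1/\epsilon$, and both reduce to the same classical input about zeros accumulating near points of the support. One caution: the zero-distribution result you cite is misstated. The hypothesis $\mu(I)>0$ is not sufficient --- if $I$ is a neighborhood of an isolated mass point of $\mu$, then $\mu(I)>0$ but $I$ meets the support only at that point, and the two adjacent gaps each contain at most one zero of $p_n$, so the zero count in $I$ stays bounded. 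The correct hypothesis is that $I$ contains infinitely many points of $\supp\mu$ (or at least $m$ of them, to guarantee $m$ zeros for large $n$). In your application this is harmless: since $0$ is a non-isolated point of $\supp\mu$, the interval $(0,\epsilon)$ contains infinitely many support points, so the count of zeros of $p_n$ (hence of $p_n^{(k)}$, after the Rolle bookkeeping) in $(0,\epsilon)$ does tend to infinity and the argument closes.
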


\begin{proof}
To prove this, it suffices to consider the derivatives of $p_n$ at $0$.  Let
\[
x_{1,0}<x_{2,0}<\cdots<x_{n,0}
\]
be the zeros of $p_n$.  Note that $0<x_{1,0}$.  Then
\[
\frac{p_n'(0)}{p_n(0)}=\sum_{j=1}^n\frac{1}{-x_{j,0}}.
\]
Since $0$ is a point of $\supp(\mu)$, it follows that $x_{1,0}\rightarrow0$ as $\nri$ and hence
\[
\lim_{\nri}\left|\frac{p_n'(0)}{p_n(0)}\right|=\infty.
\]
In other words $|\gamma_{n,0}/\gamma_{n,1}|\rightarrow0$.

In a similar way, we can consider $p_n^{(j)}$ with (strictly positive) zeros
\[
x_{1,j}<x_{2,j}<\cdots<x_{n-j,j}
\]
and
\[
\frac{p^{(j+1)}_n(0)}{p^{(j)}_n(0)}=\sum_{k=1}^{n-k}\frac{1}{-x_{k,j}}.
\]
Since $0$ is a non-isolated point of $\supp(\mu)$, it follows that $x_{j+1,0}\rightarrow0$ as $\nri$ and hence $x_{1,j}\rightarrow0$ as $\nri$.  Therefore
\[
\lim_{\nri}\left|\frac{p^{(j+1)}_n(0)}{p^{(j)}_n(0)}\right|=\infty.
\]
In other words $|\gamma_{n,j}/\gamma_{n,j+1}|\rightarrow0$.
\end{proof}

\begin{proof}[Proof of Theorem \ref{maindense2}]
Define the Hilbert space $H=L^2(\mu)\oplus\bbC^{n+1}$ with inner product
\[
\langle(f,\beta),(g,\gamma)\rangle=\int f(x)\bar{g}(x)d\mu(x)+\gamma^*\beta
\]
To each polynomial $p$, we can associate the vector in $H$ given by
\begin{equation}\label{pembed}
\vec{p}=(p,(p(0),p^{(1)}(0),\ldots,p^{(n)}(0))).
\end{equation}
Our first task is to prove that collections of all such vectors (for all polynomials $p$) is dense in $H$.

To prove this claim, suppose $(f,\beta)\in H$ is orthogonal to all vectors of the form \eqref{pembed}.  If we write
\begin{equation}\label{pform}
p_r(x)=\sum_{j=0}^rq_{r,j}x^j,
\end{equation}
then we find
\[
\int p_r(x)f(x)d\mu(x)=-\sum_{k=0}^{n}k!\beta_kq_{r,k}
\]
for all $r\in\bbN$.  Lemma \ref{gammarat} implies that there is a constant $c>0$ so that for all large $r$ we have
\[
\left|\sum_{k=0}^{n}k!\beta_kq_{r,k}\right|>c|\beta_n|.
\]
Therefore,
\[
c|\beta_n|\leq\left|\int p_r(x)f(x)d\mu(x)\right|.
\]
However, Parseval's Theorem asserts that the terms on the right-hand side of this inequality form a square summable sequence.  This gives a contradiction unless $\beta_n=0$.  We can then proceed inductively to conclude that each $\beta_j=0$ for all $j=1,\ldots,n$.  This leaves us with
\[
\int p_r(x)f(x)d\mu(x)=-\beta_0p_{r}(0).
\]
Again we can use Parseval's Theorem to conclude that $\{\beta_0p_r(0)\}_{r=0}^{\infty}$ is a square summable sequence.  If $\beta_0\neq0$, then this would imply $\{p_r(0)\}_{r=0}^{\infty}$ is a square summable sequence, which would imply $\mu(\{0\})>0$, which is a contradiction to our hypotheses.  We conclude that $\beta_0=0$ and hence $f$ is orthogonal to each $p_r$.  We are assuming that polynomials are dense in $L^2(\mu)$, this tells us $f=0$ as desired.

Now, take any $g\in L^2(\mu)$.  By our previous calculation, we know that we can find polynomials $\{p_k\}_{k\in\bbN}$ so that $\vec{p}_k\rightarrow (g,\vec{0})$ as $k\rightarrow\infty$.  Write
\[
p_k(x)=q_k(x)+x^{n+1}r_k(x)
\]
with $\deg(q_k)\leq n$.  Then it must be that
\[
(q_k(0),q_k^{(1)}(0),\ldots,q_k^{(n)}(0))=(p_k(0),p_k^{(1)}(0),\ldots,p_k^{(n)}(0))\rightarrow\vec{0}
\]
as $k\rightarrow\infty$.  This implies $q_k\rightarrow0$ in $L^2(\mu)$ as $k\rightarrow\infty$.  We also have $\|g- p_k\|_{L^2(\mu)}\rightarrow0$ as $k\rightarrow\infty$ and hence the triangle inequality implies $\|g- x^{n+1}r_k\|_{L^2(\mu)}\rightarrow0$ as $k\rightarrow\infty$, which is what we wanted to show.
\end{proof}

The key ingredient in the proof of Theorem \ref{maindense2} is the conclusion of Lemma \ref{gammarat}.  Therefore, we can prove the same result under any hypotheses that yield the same conclusion.  A short argument shows that this holds - for example - when $\mu$ is as in Lemma \ref{gammarat}, but with finitely many additional mass points in $(-\infty,0)$.


\bigskip

\noindent\textbf{Acknowledgements.}  It is a pleasure to thank Lance Littlejohn for encouraging us to pursue this line of research.



\end{document}